\newtheorem{theorem}{Theorem}[section]
\newtheorem{proposition}{Proposition}[section]
\newtheorem{lemma}[theorem]{Lemma}
\theoremstyle{definition}
\theoremstyle{remark}
\newtheorem{acknowledgement}{Acknowledgement}
\theoremstyle{corollary}
\newtheorem{corollary}[theorem]{Corollary}
\numberwithin{equation}{section}
\keywords{Singular system , $(p_{1},p_{2})$-Laplacian , Gradient term, Regularity, Schauder's fixed point}
\begin{document}
\title{Singular quasilinear elliptic systems with gradient dependence}
\author[H. Dellouche]{Halima Dellouche}
\address{H. Dellouche\\
Applied Mathematics Laboratory (LMA), Faculty of Exact Sciences, A. Mira
Bejaia University, Targa Ouzemour, 06000 Bejaia, Algeria}
\author[A. Moussaoui]{Abdelkrim Moussaoui$^{\ast }$}
\address{A. Moussaoui\\
Applied Mathematics Laboratory (LMA), Faculty of Exact Sciences\\
and Biology departement, Faculty of Natural \& Life Sciences\\
A. Mira Bejaia University, Targa Ouzemour, 06000 Bejaia, Algeria}
\thanks{$\ast $ Corresponding author}
\keywords{p-laplacian; singular systems; gradient estimate, regularity,
convection terms; fixed point.}

\begin{abstract}
In this paper, we prove existence and regularity of positive solutions for
singular quasilinear elliptic systems involving gradient terms. Our approach
is based on comparison properties, a priori estimates and Schauder's fixed
point theorem.
\end{abstract}

\maketitle

\section{Introduction}

We deal with the following quasilinear elliptic system 
\begin{equation*}
(\mathrm{P})\qquad \left\{ 
\begin{array}{l}
-\Delta _{p_{1}}u=f_{1}(x,u,v,\nabla u,\nabla v)\text{ in }\Omega \\ 
-\Delta _{p_{2}}v=f_{2}(x,u,v,\nabla u,\nabla v)\text{ in }\Omega \\ 
u,v>0\text{ \ \ \ in }\Omega \\ 
u,v=0\text{ \ \ \ on }\partial \Omega%
\end{array}%
\right.
\end{equation*}%
where $\Omega \subset 
\mathbb{R}
^{N}$ $\left( N\geq 2\right) $ is a bounded domain with smooth boundary $%
\partial \Omega $ and $\Delta _{p_{i}}$ stands for the $p_{i}$-Laplacian
differential operator on $W_{0}^{1,p_{i}}(\Omega )$ with $1<p_{i}\leq N,$ $%
i=1,2$. A solution of $(\mathrm{P})$ is understood in the weak sense, that
is a pair $(u,v)\in W_{0}^{1,p_{1}}(\Omega )\times W_{0}^{1,p_{2}}(\Omega )$
with $u,v$ positive $a.e.$ in $\Omega ,$\ and satisfying 
\begin{equation}
\begin{array}{l}
\int_{\Omega }|\nabla u|^{p_{1}-2}\nabla u\nabla \varphi \,dx=\int_{\Omega
}f_{1}(x,u,v,\nabla u,\nabla v)\varphi \,dx, \\ 
\int_{\Omega }|\nabla v|^{p_{2}-2}\nabla v\nabla \psi \,dx=\int_{\Omega
}f_{2}(x,u,v,\nabla u,\nabla v)\psi \,dx,%
\end{array}
\label{10}
\end{equation}%
for all $(\varphi ,\psi )\in W_{0}^{1,p_{1}}(\Omega )\times
W_{0}^{1,p_{2}}(\Omega )$. The nonlinear terms $f_{i}:\Omega \times
(0,+\infty )^{2}\times 
\mathbb{R}
^{2N}\rightarrow (0,+\infty ),$ $i=1,2$, are Carath\'{e}odory functions,
that is, $f_{i}(\cdot ,s_{1},s_{2},\xi _{1},\xi _{2})$ is measurable for
every $(s_{1},s_{2},\xi _{1},\xi _{2})\in (0,+\infty )^{2}\times \mathbb{R}%
^{2N}$ and $f_{i}(x,\cdot ,\cdot ,\cdot ,\cdot )$ is continuous for a.e. $%
x\in \Omega .$

Observe that the dependence on the gradient of the nonlinearities $f_{1}$
and $f_{2}$ deprives system $(\mathrm{P})$ of a variational structure
thereby making it impossible applying variational methods. Moreover, due to
such convection terms, even the application of the so-called topological
methods as sub-super solutions and fixed points technics is not standard.
Another important aspect of problem $(\mathrm{P})$ is that the
reaction-convection term $f_{i}(x,u,v,\nabla u,\nabla v),$ which is
expressed with the solution and its gradient, can exhibit singularities when
the variables $u$ and $v$ approach zero. This occur under the following
growth assumption on functions $f_{1}$ and $f_{2}$, which will be referred
throughout the paper as $(\mathrm{H}_{f})$:

\begin{description}
\item[$(\mathrm{H}_{f})$] There exist constants $M_{i},m_{i}>0,$ $\gamma
_{i},\theta _{i}\geq 0,$ $r_{i}>N$ and $\alpha _{i},\beta _{i}\in 
\mathbb{R}
$ such that 
\begin{equation*}
\begin{array}{c}
m_{i}s_{1}^{\alpha _{i}}s_{2}^{\beta _{i}}\leq f_{i}(x,s_{1},s_{2},\xi
_{1},\xi _{2})\leq M_{i}s_{1}^{\alpha _{i}}s_{2}^{\beta _{i}}+|\xi
_{1}|^{\gamma _{i}}+|\xi _{2}|^{\theta _{i}}\ 
\end{array}%
\end{equation*}
for a.e. $x\in \Omega $, for all $s_{1},s_{2}>0$, and all $\xi _{1},\xi
_{2}\in \mathbb{R}^{N}$, with%
\begin{equation}
-1/r_{i}\leq \alpha _{i}+\beta _{i}<\frac{p_{i}-1}{r_{i}}\text{ \ and \ }%
\max \{\gamma _{i},\theta _{i}\}<\frac{p_{i}-1}{r_{i}},\text{ for }i=1,2.
\label{cdt}
\end{equation}
\end{description}

\mathstrut

Quasilinear convective problem $(\mathrm{P})$ is involved in various
nonlinear processes that occur in many engineering and natural systems. In
biology and physiology, it arises in heat transfer of gas and liquid flow in
plants and animals as well as in cell reactors, incubators, and biomass
systems. In chemical processes, it appears in catalytic and noncatalytic
reactions in exothermic and endothermic reacting flows and in polymer
processing. In geology, it is involved in thermoconvective motion of magmas
and in melt solidification in magma chambers and during volcanic eruptions.
It also arises in global climate energy balance models by coupling the
equation for the mean surface temperature on $\partial \Omega $ with the
equation in $\Omega $ for the ocean temperature. For more inquiries on
modeling physical phenomena we refer to \cite{Ka, DT} and the references
therein. Furthermore, many important singular systems could be incorporated
in the statement of $(\mathrm{P})$. In this respect, we mention for instance
Gierer-Meinhardt system which models some biochemical processes \cite{K}, as
well as singular Lane-Emden system which arises in astrophysics \cite{G}.

Nevertheless, despite its obvious importance, quasilinear singular
convective system $(\mathrm{P})$ has been rarely investigated in the
literature. Actually, in \cite{CLM}, problem $(\mathrm{P})$ subjected to
growth condition like $(\mathrm{H}_{f})$ with 
\begin{equation}
\alpha _{1},\beta _{2}<0<\alpha _{2},\beta _{1},  \label{1}
\end{equation}%
was examined under cooperative structure of nonlinearities $%
f_{1}(x,u,v,\nabla u,\nabla v)$ and $f_{2}(x,u,v,\nabla u,\nabla v)$. This
means that the latter are increasing with respect to $v$ and $u$,
respectively. The complementary situation with respect to (\ref{1}) is the
so-called competitive system. In this context, singular system $(\mathrm{P})$
is studied in \cite{MMZ}, under some appropriate growth conditions that does
not fit our setting in $(\mathrm{H}_{f})$, where the singularities come out
through the solution and its gradient. It is worth pointing out in the
previous papers that a control on the gradient of solutions, besides the one
on solutions, is required hence the use of \cite[Lemma 1]{BE}. At this
point, in \cite{MMZ}, the gradient estimate is obtained through some
specific growth conditions combined with properties of the eigenfunction
corresponding to the first eigenvalue of the operator $-\Delta _{p_{i}}$,
while in \cite{CLM}, besides (\ref{1}), it is requested that $\alpha
_{i}+\beta _{i}\geq 0,$ for $i=1,2$, which is the key ingredient so that 
\cite[Lemma 1]{BE} could be applicable.

Unlike quasilinear systems, meaningful contributions are already available
for semilinear singular convective systems (i.e., $p_{1}=p_{2}=2$). Here, we
quote for instance \cite{ACF, AM} and the references therein, where the
linearity of the principal part has been crucial. The particular case in
singular system $(\mathrm{P})$ when the convection terms $\nabla u$ and $%
\nabla v$ are removed has received a special attention. Relevant
contributions regarding this topic can be found in \cite{AC, DM1, DM2, EPS,
KM, MM3, MM2, MM1} and the references given there.

Motivated by the aforementioned papers, the aim of this work is to establish
existence and regularity of (positive) solutions for quasilinear singular
convective system $(\mathrm{P})$ subjected to the growth condition $(\mathrm{%
H}_{f})$. The main result is formulated as follows.

\begin{theorem}
\label{T1}Under assumption $(\mathrm{H}_{f}),$ system $(\mathrm{P})$ admits
a positive solution $(u,v)$ in $C_{0}^{1,\sigma }(\overline{\Omega })\times
C_{0}^{1,\sigma }(\overline{\Omega })$ for certain $\sigma \in (0,1)$.
\end{theorem}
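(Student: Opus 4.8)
The plan is to prove Theorem \ref{T1} by a Schauder fixed-point argument on a suitably truncated and regularized version of the system, combined with uniform a priori bounds that allow us to remove the regularization in the limit.

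First I would freeze the convection: given a pair $(w_{1},w_{2})$ in a large ball of $C^{1}(\overline{\Omega })\times C^{1}(\overline{\Omega })$, consider the decoupled problems $-\Delta_{p_{i}} z = f_{i}(x,\tilde z_{1},\tilde z_{2},\nabla w_{1},\nabla w_{2})$ where $\tilde z_{j}$ denotes $z_{j}$ truncated from below by a small sub-solution (built from the torsion function / first eigenfunction, so as to kill the singularity coming from $s_{1}^{\alpha_{i}}s_{2}^{\beta_{i}}$ with possibly negative exponents). To make the right-hand side honestly bounded I would also cap the gradient terms $|\xi_{j}|^{\gamma_{i}},|\xi_{j}|^{\theta_{i}}$ at level $n$, obtaining an auxiliary problem $(\mathrm{P}_{n})$. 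For fixed $n$ the right-hand side lies in $L^{r_{i}}(\Omega)$ with $r_{i}>N$, so each decoupled equation has a unique solution $z_{i}\in C_{0}^{1,\sigma}(\overline{\Omega})$ by the standard regularity theory for the $p_{i}$-Laplacian (Lieberman), and the solution map $(w_{1},w_{2})\mapsto (z_{1},z_{2})$ is continuous and compact from $C^{1}\times C^{1}$ into itself. Schauder's theorem then yields a fixed point $(u_{n},v_{n})$ of $(\mathrm{P}_{n})$.

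Next comes the core analytic work: uniform a priori estimates independent of $n$. The lower estimate $f_{i}\ge m_{i}s_{1}^{\alpha_{i}}s_{2}^{\beta_{i}}$ together with a comparison argument produces a fixed positive sub-solution $(\underline u,\underline v)$ (with the right boundary behaviour, e.g.\ $\underline u\ge c\,d(x)$), so that $u_{n}\ge\underline u$, $v_{n}\ge\underline v$ uniformly; in particular the truncation $\tilde z_{j}=z_{j}$ is inactive and the singular factors are bounded on the solution set by $\underline u^{\alpha_{i}^{-}}\underline v^{\beta_{i}^{-}}\in L^{r_{i}}(\Omega)$ — here the left inequality $-1/r_{i}\le\alpha_{i}+\beta_{i}$ in (\ref{cdt}) is exactly what guarantees integrability. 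For the upper/gradient estimate I would invoke \cite[Lemma 1]{BE}: the structure condition $f_{i}\le M_{i}s_{1}^{\alpha_{i}}s_{2}^{\beta_{i}}+|\xi_{1}|^{\gamma_{i}}+|\xi_{2}|^{\theta_{i}}$ with $\max\{\gamma_{i},\theta_{i}\}<(p_{i}-1)/r_{i}<p_{i}-1$ places us in the sub-critical gradient regime, so that lemma gives an $L^{\infty}$ bound on $u_{n},v_{n}$ and then on $\nabla u_{n},\nabla v_{n}$ that depends only on the $L^{r_{i}}$-type data and not on $n$. With $\|u_{n}\|_{\infty},\|\nabla u_{n}\|_{\infty}$ (and likewise for $v_{n}$) controlled, the frozen right-hand sides are bounded in $L^{r_{i}}(\Omega)$ uniformly in $n$, whence by Lieberman's global regularity $\|u_{n}\|_{C^{1,\sigma}},\|v_{n}\|_{C^{1,\sigma}}\le C$ for a common $\sigma\in(0,1)$ and $C$ independent of $n$.

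Finally I would pass to the limit: by Arzelà–Ascoli, $u_{n}\to u$, $v_{n}\to v$ in $C^{1}(\overline{\Omega})$ along a subsequence, with $u\ge\underline u>0$, $v\ge\underline v>0$ in $\Omega$ and $u,v\in C_{0}^{1,\sigma}(\overline{\Omega})$. Since the gradients converge uniformly, for $n$ large the capping of $|\xi_{j}|^{\gamma_{i}},|\xi_{j}|^{\theta_{i}}$ at level $n$ is inactive, so $f_{i}(x,u_{n},v_{n},\nabla u_{n},\nabla v_{n})\to f_{i}(x,u,v,\nabla u,\nabla v)$ pointwise a.e.\ and is dominated by a fixed $L^{1}$ function (using the uniform lower bounds and the $C^{1}$ bounds); dominated convergence then lets me take the limit in the weak formulation (\ref{10}), and the usual $(S_{+})$-property of $-\Delta_{p_{i}}$ upgrades weak to strong $W_{0}^{1,p_{i}}$ convergence so that $(u,v)$ is a genuine weak solution. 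The main obstacle is the uniform-in-$n$ gradient bound: one must verify that the hypotheses of \cite[Lemma 1]{BE} are met with constants that do not degenerate as the truncation is removed, and this is precisely where the quantitative restrictions in (\ref{cdt}) on $\alpha_{i}+\beta_{i}$ and on $\gamma_{i},\theta_{i}$ relative to $(p_{i}-1)/r_{i}$ are used; the interplay between the lower bound guaranteeing $L^{r_{i}}$-integrability of the singular term and the sub-$(p_{i}-1)$ gradient growth is the crux of the argument.
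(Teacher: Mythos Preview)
Your outline is plausible and would likely succeed, but it takes a noticeably different and more circuitous route than the paper. The paper does \emph{not} regularize with a parameter $n$ and pass to the limit: instead it applies Schauder's theorem directly on the closed convex set
\[
\mathcal{O}_{C}=\{(w_{1},w_{2})\in C_{0}^{1}(\overline{\Omega})^{2}: \underline{u}\le w_{1}\le\overline{u},\ \underline{v}\le w_{2}\le\overline{v},\ \|\nabla w_{1}\|_{\infty},\|\nabla w_{2}\|_{\infty}\le C\},
\]
where $(\underline u,\underline v)$ and $(\overline u,\overline v)$ are explicit comparison functions (Lemma~\ref{L2}, Proposition~\ref{P1}) satisfying $c_{0}d(x)\le \underline u,\underline v$ and $\overline u,\overline v\le c_{1}d(x)$. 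The solution map freezes \emph{all} arguments of $f_{i}$, i.e.\ $\mathcal{T}(w_{1},w_{2})=(u,v)$ with $-\Delta_{p_{i}}u_{i}=f_{i}(x,w_{1},w_{2},\nabla w_{1},\nabla w_{2})$, so the auxiliary problem is a genuine $p_{i}$-Laplace equation with fixed right-hand side in $L^{r_{i}}$; no inner coupling or truncation is needed. Your description has $-\Delta_{p_{i}}z=f_{i}(x,\tilde z_{1},\tilde z_{2},\nabla w_{1},\nabla w_{2})$ with $\tilde z_{j}$ a truncation of the unknown $z_{j}$, which keeps the system coupled and makes the claim ``each decoupled equation has a unique solution'' unjustified as stated; you would need a second fixed-point layer or to freeze $w_{1},w_{2}$ in the zeroth-order slots as well.

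The more substantive difference is the gradient estimate. You invoke \cite[Lemma~1]{BE}, but the paper explicitly points out that this was the device of \cite{CLM} and required $\alpha_{i}+\beta_{i}\ge 0$; the present paper allows $\alpha_{i}+\beta_{i}\ge -1/r_{i}$ and instead proves its own gradient bound (Theorem~\ref{T4}) via the global estimates of Cianchi--Maz'ya \cite[Theorem~3.1; Remark~3.3]{CM}, giving $\|\nabla u\|_{\infty}\le k_{p}\|f_{1}\|_{r_{1}}^{1/(p_{1}-1)}$ once the right-hand side is shown to lie in $L^{r_{1}}$. Since on $\mathcal{O}_{C}$ one already has $w_{j}\ge c_{0}C^{-1}d(x)$ and $\|\nabla w_{j}\|_{\infty}\le C$, the bound $f_{i}\le \tilde c\, C^{|\alpha_{i}|+|\beta_{i}|}d(x)^{\alpha_{i}+\beta_{i}}+2C^{\max\{\gamma_{i},\theta_{i}\}}$ is immediate, and the condition $\alpha_{i}+\beta_{i}<(p_{i}-1)/r_{i}$ together with $\max\{\gamma_{i},\theta_{i}\}<(p_{i}-1)/r_{i}$ lets one close the loop $\|\nabla u\|_{\infty}\le C$ for $C$ large. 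Your approach of first obtaining lower bounds to place the singular term in $L^{r_{i}}$ and then bootstrapping the gradient is morally the same idea, but the paper's packaging avoids both the regularization/limit step and the dependence on \cite{BE}, which is precisely the point it wishes to make over \cite{CLM}.
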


The main technical difficulty consists in the involvement of gradient terms
in system $(\mathrm{P})$ as well as the presence of singular terms that can
occur under hypothesis $(\mathrm{H}_{f})$. This difficulty is heightened by
the very marked singularity character of $(\mathrm{P})$ that arises from $(%
\mathrm{H}_{f})$ when $\alpha _{i}+\beta _{i}<0$ for $i=1,2$. Moreover,
unlike in the earlier papers, neither cooperative nor competitive structure
on the system is imposed. In fact, these both complementary structures for
the system $(\mathrm{P})$ are handled simultaneously without referring to
them despite their important structural disparity that makes functions $%
f_{1} $ and $f_{2}$ behaving in a drastically different way.

Our approach is chiefly based on Schauder's fixed point Theorem (see \cite{Z}%
). We establish some comparison arguments through suitable functions with an
adjustment of adequate constants. These functions beget a positive rectangle
providing a localization of an eventual solution. On the other hand, we
provide a priori estimate on the gradient of solutions by exploiting the
growth condition $(\mathrm{H}_{f})$ together with \cite[Theorem 3.1; Remark
3.3]{CM}. Thence, this established a control on solutions and their gradient
that combined together leads to a fixed point through Schauder's Theorem,
which is actually a solution of $(\mathrm{P})$. It is worth pointing out
that our argument is substantially different from that developed in \cite%
{CLM} where extremal solutions concept is relevant. Another significant
feature of our existence result concerns the regularity part. At this point,
Theorem \ref{T4} stated in section \ref{S2} provides a regularity result for
problem $(\mathrm{P})$ extending \cite[Lemma 3.1]{Hai} to singular systems
involving convection terms.

The rest of the paper is organized as follows. Section \ref{S2} establishes
gradient estimates and regularity result; Section \ref{S3} deals with
comparison properties; Section \ref{4} contains the proof of the main result.

\section{Gradient estimate and regularity}

\label{S2}

In the sequel, the Banach spaces $W^{1,p}(\Omega )$ and $L^{p}(\Omega )$ are
equipped with the usual norms $\Vert \cdot \Vert _{1,p}$ and $\Vert \cdot
\Vert _{p}$, respectively, whereas the space $W_{0}^{1,p}(\Omega )$ is
endowed with the equivalent norm $\Vert u\Vert =(\int_{\Omega }|\nabla
u|^{p}\,\mathrm{d}x)^{\frac{1}{p}}$. We denote by $p^{\prime }=\frac{p}{p-1}$%
. We also utilize $C^{1}(\overline{\Omega })$ and $C_{0}^{1,\sigma }(%
\overline{\Omega })$ such that for $\sigma \in (0,1)$ we have $%
C_{0}^{1,\sigma }(\overline{\Omega })=\{u\in C^{1,\sigma }(\overline{\Omega }%
):u=0$ on $\partial \Omega \}$. The continuity of the embedding $%
W_{0}^{1,p}(\Omega )$ in $L^{r}(\Omega ),$ for $1\leq r\leq \frac{Np}{N-p},$
guarantees the existence of a constant $c_{r}>0$ such that 
\begin{equation}
\Vert u\Vert _{r}\leq c_{r}\Vert u\Vert \ \text{ for all}\ u\in
W_{0}^{1,p}(\Omega ).  \label{embed}
\end{equation}

Let us introduce the quasilinear system%
\begin{equation*}
(\mathrm{P}_{h})\qquad \left\{ 
\begin{array}{l}
-\Delta _{p_{1}}u=h_{1}(x,u,v,\nabla u,\nabla v)\text{ in }\Omega \\ 
-\Delta _{p_{2}}v=h_{2}(x,u,v,\nabla u,\nabla v)\text{ in }\Omega \\ 
u,v=0\text{ \ \ \ on }\partial \Omega ,%
\end{array}%
\right.
\end{equation*}%
where $h_{i}:\Omega \times 
\mathbb{R}
^{2}\times 
\mathbb{R}
^{2N}\rightarrow 
\mathbb{R}
$ is a Carath\'{e}odory function satisfying the following assumption:

\begin{description}
\item[$(\mathrm{H}_{h})$] There exist constants $M>0,$ $r_{i}>N$ and $\mu
_{i}<0\leq \gamma _{i},\theta _{i}$ such that%
\begin{equation*}
\max_{i=1,2}|h_{i}(x,s_{1},s_{2},\xi _{1},\xi _{2})|\leq M(d(x)^{\mu
_{i}}+\left\vert \xi _{1}\right\vert ^{\gamma _{i}}+\left\vert \xi
_{2}\right\vert ^{\theta _{i}}),
\end{equation*}%
for a.e. $x\in \Omega $, for all $s_{i}\in 
\mathbb{R}
$, and all $\xi _{i}\in \mathbb{R}^{N}$ such that%
\begin{equation}
\mu _{i}>-\frac{1}{r_{i}}\text{ \ and \ }\max \{\gamma _{i},\theta _{i}\}<%
\frac{p_{i}-1}{r_{i}},\text{ for }i=1,2.  \label{5}
\end{equation}
\end{description}

The following result is a key point in the proof of Theorem \ref{T1}.

\begin{theorem}
\label{T4}Assume $(\mathrm{H}_{h})$ holds true. Then, there is a constant $%
k_{p}>0,$ depending only on $p_{1},p_{2}$ and $\Omega ,$ such that%
\begin{equation*}
\begin{array}{l}
\Vert \nabla u\Vert _{\infty }\leq k_{p}||h_{1}(\cdot ,u,v,\nabla u,\nabla
v)\Vert _{r_{1}}^{\frac{1}{p_{1}-1}}%
\end{array}%
\end{equation*}%
and%
\begin{equation*}
\begin{array}{l}
\Vert \nabla v\Vert _{\infty }\leq k_{p}||h_{2}(\cdot ,u,v,\nabla u,\nabla
v)\Vert _{r_{2}}^{\frac{1}{p_{2}-1}}.%
\end{array}%
\end{equation*}%
Moreover, there are constants $R>0$ and $\sigma \in (0,1)$ such that all
solutions $(u,v)\in W_{0}^{1,p_{1}}(\Omega )\times W_{0}^{1,p_{2}}(\Omega )$
of problem $(\mathrm{P}_{h})$ belong to $C_{0}^{1,\sigma }(\overline{\Omega }%
)\times C_{0}^{1,\sigma }(\overline{\Omega })$ and satisfy the estimate%
\begin{equation}
\left\Vert u\right\Vert _{C^{1,\sigma }(\overline{\Omega })},\left\Vert
v\right\Vert _{C^{1,\sigma }(\overline{\Omega })}<R\text{.}  \label{regu}
\end{equation}
\end{theorem}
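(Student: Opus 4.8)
The plan is to reduce the statement to a known pointwise gradient bound for the scalar $p$-Laplacian and then bootstrap to $C^{1,\sigma}$ regularity. First I would treat each equation of $(\mathrm P_h)$ separately, freezing the argument: given a solution $(u,v)$, set $g_1(x):=h_1(x,u(x),v(x),\nabla u(x),\nabla v(x))$ and $g_2(x):=h_2(x,u(x),v(x),\nabla u(x),\nabla v(x))$, so that $u$ solves $-\Delta_{p_1}u=g_1$ in $\Omega$, $u=0$ on $\partial\Omega$, and similarly for $v$. The first task is to check that $g_i\in L^{r_i}(\Omega)$: by $(\mathrm H_h)$ we have $|g_i|\le M(d(x)^{\mu_i}+|\nabla u|^{\gamma_i}+|\nabla v|^{\theta_i})$, and since $\mu_i r_i>-1$ the singular-weight term $d(x)^{\mu_i}$ lies in $L^{r_i}(\Omega)$ (a standard fact because $d(\cdot)^{t}\in L^1$ for $t>-1$ near a smooth boundary), while the gradient terms are a priori in $L^{r_i}$ once we know $\nabla u,\nabla v\in L^\infty$; more carefully, one first argues $g_i\in L^{q}$ for some $q>N/p_i$ to get $u,v\in C^1$, then closes the loop. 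With $g_i\in L^{r_i}(\Omega)$ in hand, I would invoke \cite[Theorem 3.1; Remark 3.3]{CM} — the cited a priori $L^\infty$ gradient estimate for the $p$-Laplacian — which yields a constant $k_p=k_p(p_1,p_2,\Omega)$ with $\|\nabla u\|_\infty\le k_p\|g_1\|_{r_1}^{1/(p_1-1)}$ and $\|\nabla v\|_\infty\le k_p\|g_2\|_{r_2}^{1/(p_2-1)}$, which is exactly the first pair of displayed inequalities.

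For the second part, the $C^{1,\sigma}$ regularity, I would feed the established bound $\nabla u,\nabla v\in L^\infty(\Omega)$ back into $(\mathrm H_h)$: then $|g_i(x)|\le M(d(x)^{\mu_i}+\|\nabla u\|_\infty^{\gamma_i}+\|\nabla v\|_\infty^{\theta_i})$, so each $g_i$ is dominated by an $L^{r_i}$ function with $r_i>N$, hence $g_i\in L^{r_i}(\Omega)\hookrightarrow W^{-1,r_i}(\Omega)$ with norm controlled by the quantities already bounded. Now the regularity theory for the $p$-Laplacian with right-hand side in $L^{q}$, $q>N$ — the Lieberman/DiBenedetto-type global $C^{1,\sigma}$ estimates up to the boundary for the Dirichlet problem (this is precisely the content extending \cite[Lemma 3.1]{Hai}) — gives $u,v\in C^{1,\sigma}_0(\overline\Omega)$ for some $\sigma\in(0,1)$ depending on $p_i,r_i,N,\Omega$, together with a bound $\|u\|_{C^{1,\sigma}(\overline\Omega)}\le C(\|g_1\|_{r_1},\|u\|_{1,p_1})$ and analogously for $v$.

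To finish, I would extract a uniform constant $R$. Combining the gradient estimate with the energy identity $\|u\|^{p_1}=\int_\Omega g_1 u\,dx$ (and Hölder plus the Sobolev embedding \eqref{embed}) bounds $\|u\|_{1,p_1}$ in terms of $\|g_1\|_{r_1}$; but $\|g_1\|_{r_1}$ is itself controlled: $\|g_1\|_{r_1}\le M(\|d^{\mu_1}\|_{r_1}+|\Omega|^{1/r_1}(\|\nabla u\|_\infty^{\gamma_1}+\|\nabla v\|_\infty^{\theta_1}))$, and the gradient $L^\infty$ norms are bounded by $k_p\|g_i\|_{r_i}^{1/(p_i-1)}$. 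Because of the exponent restriction $\max\{\gamma_i,\theta_i\}<\frac{p_i-1}{r_i}$ in \eqref{5}, the resulting self-referential inequality for $A:=\|g_1\|_{r_1}+\|g_2\|_{r_2}$ has the form $A\le c(1+A^{\kappa})$ with $\kappa:=\max_i\frac{\max\{\gamma_i,\theta_i\}}{p_i-1}<\frac1{r_i}<1$, i.e. sublinear growth, which forces an absolute a priori bound $A\le A_0$ with $A_0$ depending only on the structural constants. Propagating $A\le A_0$ through the $C^{1,\sigma}$ estimate of the previous paragraph produces the desired $R>0$, uniform over all solutions, giving \eqref{regu}. The main obstacle I anticipate is precisely this closing step: making the bootstrap rigorous requires care that the gradient terms are legitimately absorbed, i.e. that one may first establish $\nabla u,\nabla v\in L^\infty$ before claiming $g_i\in L^{r_i}$ — this is resolved by the standard two-stage argument (first obtain $g_i\in L^{q}$ for a smaller $q>N/p_i$ via the a priori $W^{1,p_i}$ membership and the subcritical exponents, deduce $\nabla u,\nabla v\in C^0\subset L^\infty$, then upgrade to $L^{r_i}$) — together with verifying that the sublinearity exponent $\kappa$ is genuinely $<1$, which is exactly what \eqref{5} guarantees.
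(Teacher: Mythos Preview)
Your overall architecture matches the paper's: freeze the solution to view each line as a scalar $p_i$-Laplace equation with datum $g_i$, invoke the Cianchi--Maz'ya $L^\infty$ gradient bound \cite[Theorem 3.1; Remark 3.3]{CM}, and then upgrade to $C^{1,\sigma}$ via \cite[Lemma~3.1]{Hai}. Where you diverge from the paper is in how the \emph{uniform} bound on $\|g_i\|_{r_i}$ (and hence the constant $R$) is obtained.

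The paper does \emph{not} close by a self-referential inequality on $A=\|g_1\|_{r_1}+\|g_2\|_{r_2}$. It first runs a direct energy estimate: testing the $i$-th equation against $u$ (resp.\ $v$), applying Hardy--Sobolev to $\int d(x)^{\mu_i}u$ and Young's inequality to the convection terms, it arrives at inequalities of the form
\[
\|\nabla u\|_{p_1}^{p_1}\le C_1\|\nabla u\|_{p_1}+2\varepsilon\|\nabla u\|_{p_1}^{p_1}+c_\varepsilon\bigl(\|\nabla u\|_{p_1}^{\gamma_1 p_1'}+\|\nabla v\|_{p_2}^{\theta_1 p_1'}\bigr),
\]
together with the symmetric one for $v$. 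The crucial exponent check is \eqref{47}, namely $\max_i\gamma_ip_i'<p_1$ and $\max_i\theta_ip_i'<p_2$, which the paper derives from \eqref{5} using $r_i>N\ge p_i>p_i/p_j$. Summing and taking $\varepsilon$ small gives uniform bounds $\|\nabla u\|_{p_1},\|\nabla v\|_{p_2}\le c_0$; from these the uniform bound $\|h_i\|_{r_i}\le c_1$ is read off, and only then is \cite{CM} applied.

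Your self-referential route, as written, has an index slip that is not cosmetic. Inserting $\|\nabla u\|_\infty\le k_p\|g_1\|_{r_1}^{1/(p_1-1)}$ and $\|\nabla v\|_\infty\le k_p\|g_2\|_{r_2}^{1/(p_2-1)}$ into the bound for $\|g_1\|_{r_1}$ produces powers $\gamma_1/(p_1-1)$ and $\theta_1/(p_2-1)$; the bound for $\|g_2\|_{r_2}$ produces $\gamma_2/(p_1-1)$ and $\theta_2/(p_2-1)$. Your $\kappa=\max_i\max\{\gamma_i,\theta_i\}/(p_i-1)$ captures only the ``diagonal'' ones. Hypothesis \eqref{5} gives $\theta_1<(p_1-1)/r_1$ and $\gamma_2<(p_2-1)/r_2$, which does \emph{not} by itself force the cross exponents $\theta_1/(p_2-1)$ or $\gamma_2/(p_1-1)$ below $1$ (take $p_1$ close to $1$ with $p_2$ near $N$, or vice versa). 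So your inequality $A\le c(1+A^\kappa)$ with $\kappa<1$ is not the one that actually arises, and the argument does not close as stated. The paper's energy-first step, with the exponent condition \eqref{47}, is exactly what decouples the system at a level where \eqref{5} suffices. (A smaller point: in your two-stage bootstrap the threshold ``$q>N/p_i$'' gives $u\in L^\infty$, not $\nabla u\in L^\infty$; for the gradient bound you need $q>N$, which is why $r_i>N$ is assumed.)
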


\begin{proof}
Let $(u,v)\in W_{0}^{1,p_{1}}(\Omega )\times W_{0}^{1,p_{2}}(\Omega )$ be a
solution of $(\mathrm{P}_{h})$. Multiplying the first equation in $(\mathrm{P%
}_{h})$ by $u,$\ integrating over $\Omega $ and using $(\mathrm{H}_{h})$,
one gets%
\begin{equation}
\begin{array}{l}
\int_{\Omega }|\nabla u|^{p_{1}}\text{ }\mathrm{d}x=\int_{\Omega
}h_{1}(x,u,v,\nabla u,\nabla v)u\text{ }\mathrm{d}x \\ 
\leq M\int_{\Omega }(d(x)^{\mu _{1}}+\left\vert \nabla u\right\vert ^{\gamma
_{1}}+\left\vert \nabla v\right\vert ^{\theta _{1}})u\text{ }\mathrm{d}x.%
\end{array}
\label{43}
\end{equation}%
Since $\mu _{1}>-1$, by Hardy-Sobolev inequality (see, e.g., \cite[Lemma 2.3]%
{AC}), there exists a positive constant $C_{1}$ such that%
\begin{equation}
\int_{\Omega }d(x)^{\mu _{1}}u\text{ }\mathrm{d}x\leq C_{1}\Vert \nabla
u\Vert _{p_{1}}\text{.}  \label{45}
\end{equation}%
Using (\ref{5}), Young inequality and (\ref{embed}) imply that%
\begin{equation}
\begin{array}{l}
\int_{\Omega }\left\vert \nabla u\right\vert ^{\gamma _{1}}u\text{ }\mathrm{d%
}x\leq \varepsilon \int_{\Omega }|u|^{p_{1}}\text{ }\mathrm{d}%
x+c_{\varepsilon }\int_{\Omega }|\nabla u|^{\gamma _{1}p_{1}^{\prime }}\text{
}\mathrm{d}x \\ 
\leq \varepsilon \left\Vert \nabla u\right\Vert
_{p_{1}}^{p_{1}}+c_{\varepsilon }\left\Vert \nabla u\right\Vert
_{p_{1}}^{\gamma _{1}p_{1}^{\prime }},%
\end{array}%
\end{equation}%
\begin{equation}
\begin{array}{l}
\int_{\Omega }\left\vert \nabla v\right\vert ^{\theta _{1}}u\text{ }\mathrm{d%
}x\leq \varepsilon \int_{\Omega }|u|^{p_{1}}\text{ }\mathrm{d}%
x+c_{\varepsilon }\int_{\Omega }|\nabla v|^{\theta _{1}p_{1}^{\prime }}\text{
}\mathrm{d}x \\ 
\leq \varepsilon \left\Vert \nabla u\right\Vert
_{p_{1}}^{p_{1}}+c_{\varepsilon }\left\Vert \nabla v\right\Vert
_{p_{2}}^{\theta _{1}p_{1}^{\prime }},%
\end{array}%
\end{equation}%
for every $\varepsilon >0$ and with a certain constant $c_{\varepsilon }>0$
depending on $\varepsilon $.

Similarly, multiplying the second equation in $(\mathrm{P}_{h})$ by $v,$\
integrating over $\Omega $ and repeating the previous argument leads to%
\begin{equation}
\begin{array}{l}
\int_{\Omega }|\nabla v|^{p_{2}}\text{ }\mathrm{d}x=\int_{\Omega
}h_{2}(u,v,\nabla u,\nabla v)u\text{ }\mathrm{d}x \\ 
\leq M\int_{\Omega }(d(x)^{\mu _{2}}+\left\vert \nabla u\right\vert ^{\gamma
_{2}}+\left\vert \nabla v\right\vert ^{\theta _{2}})v\text{ }\mathrm{d}x%
\end{array}%
\end{equation}%
with%
\begin{equation}
\int_{\Omega }d(x)^{\mu _{2}}v\text{ }\mathrm{d}x\leq C_{2}\Vert \nabla
v\Vert _{p_{2}},
\end{equation}%
\begin{equation}
\int_{\Omega }\left\vert \nabla v\right\vert ^{\theta _{2}}v\text{ }\mathrm{d%
}x\leq \varepsilon \left\Vert \nabla v\right\Vert
_{p_{2}}^{p_{2}}+c_{\varepsilon }^{\prime }\left\Vert \nabla v\right\Vert
_{p_{2}}^{\theta _{2}p_{2}^{\prime }},
\end{equation}%
\begin{equation}
\int_{\Omega }\left\vert \nabla u\right\vert ^{\gamma _{2}}v\text{ }\mathrm{d%
}x\leq \varepsilon \left\Vert \nabla v\right\Vert
_{p_{2}}^{p_{2}}+c_{\varepsilon }^{\prime }\left\Vert \nabla u\right\Vert
_{p_{1}}^{\gamma _{2}p_{2}^{\prime }}.  \label{44}
\end{equation}%
for $\varepsilon >0$ and a constant $c_{\varepsilon }^{\prime }>0$. On the
other hand, since $r_{i}>N>\frac{p_{i}}{p_{j}},$ for $i,j=1,2,i\neq j$, it
follows, from (\ref{5}), that%
\begin{equation}
\max_{i=1,2}(\gamma _{i}p_{i}^{\prime })<p_{1}\text{ \ and \ }%
\max_{i=1,2}(\theta _{i}p_{i}^{\prime })<p_{2}.  \label{47}
\end{equation}

Then, gathering (\ref{43})-(\ref{44}) together, on account of (\ref{47}) and
by taking $\varepsilon $ small, there is a constant $c_{0}>0,$ independent
of the solution $(u,v)$, such that 
\begin{equation}
\left\Vert \nabla u\right\Vert _{p_{1}},\left\Vert \nabla v\right\Vert
_{p_{2}}\leq c_{0}.  \label{46}
\end{equation}%
Combining the last estimate with (\ref{45}), through the growth condition $(%
\mathrm{H}_{h})$, it follows that 
\begin{equation}
\Vert h_{i}(u,v,\nabla u,\nabla v)\Vert _{r_{i}}\leq c_{1},  \label{2'}
\end{equation}%
with a constant $c_{1}>0$ independent of the solution $(u,v)$. At this point
we make use of the assumption that $r_{i}>N$, which enables us to refer to
the gradient bound in \cite[Theorem 3.1; Remark 3.3]{CM}. Consequently, due
to (\ref{2'}), we infer that 
\begin{equation}
\Vert \nabla u\Vert _{\infty }\leq k_{p}||h_{1}(\cdot ,u,v,\nabla u,\nabla
v)\Vert _{r_{1}}^{\frac{1}{p_{1}-1}}  \label{3}
\end{equation}%
and 
\begin{equation}
\Vert \nabla v\Vert _{\infty }\leq k_{p}||h_{2}(\cdot ,u,v,\nabla u,\nabla
v)\Vert _{r_{2}}^{\frac{1}{p_{2}-1}},  \label{3*}
\end{equation}%
where the constant $k_{p}>0$ depends only on $p_{1},p_{2}$ and $\Omega $. On
the other hand, by $(\mathrm{H}_{h})$ and (\ref{2'})-(\ref{3*}) we have 
\begin{equation*}
\begin{array}{l}
|h_{i}(x,u,v,\nabla u,\nabla v)|\leq M\left( d(x)^{\mu _{i}}+\left\vert
\nabla u\right\vert ^{\gamma _{i}}+\left\vert \nabla v\right\vert ^{\theta
_{i}}\right) \\ 
\leq M\left( d(x)^{\mu _{i}}+\left\Vert \nabla u\right\Vert _{\infty
}^{\gamma _{i}}+\left\Vert \nabla v\right\Vert _{\infty }^{\theta
_{i}}\right) \\ 
\leq M\left( d(x)^{\mu _{i}}+(k_{p}c_{1}^{\frac{1}{p_{1}-1}})^{\gamma
_{i}}+(k_{p}c_{1}^{\frac{1}{p_{2}-1}})^{\theta _{i}}\right) \\ 
\leq \hat{M}\left( d(x)^{\mu _{i}}+1\right) \text{ in }\Omega ,%
\end{array}%
\end{equation*}%
for some constant $\hat{M}>0$ independent of $u$ and $v$. Thus, since $\mu
_{i}>-1$, the nonlinear regularity up to the boundary in \cite[Lemma 3.1]%
{Hai} applies, showing that the solution $(u,v)$ is bounded in $%
C_{0}^{1,\sigma }(\overline{\Omega })\times C_{0}^{1,\sigma }(\overline{%
\Omega })$ for certain $\sigma \in (0,1)$. This ends the proof.
\end{proof}

In the scalar case of $(\mathrm{P}_{h})$, the previous regularity Theorem %
\ref{T4} establishes an extension of \cite[Lemma 3.1]{Hai} to singular
problems involving convection terms. It is formulated as follows.

\begin{corollary}
For $1<p\leq N$ and $M>0,$ let $h:\Omega \times 
\mathbb{R}
\times 
\mathbb{R}
^{N}\rightarrow 
\mathbb{R}
$ be a Carath\'{e}odory function satisfying%
\begin{equation*}
|h(x,s,\xi )|\leq M(d(x)^{\mu }+\left\vert \xi \right\vert ^{\gamma }),
\end{equation*}%
for a.e. $x\in \Omega $, for all $s\in 
\mathbb{R}
,$ and all $\xi \in \mathbb{R}^{N}$ with%
\begin{equation*}
r>N\text{ and }-\frac{1}{r}<\mu <0\leq \gamma <\frac{p-1}{r}.
\end{equation*}%
Then, there are constants $R>0$ and $\sigma \in (0,1)$ such that all
solutions $u\in W_{0}^{1,p}(\Omega )$ of Dirichlet problem%
\begin{equation*}
\left\{ 
\begin{array}{l}
-\Delta _{p}u=h(x,u,\nabla u)\text{ in }\Omega \\ 
u=0\text{ \ \ \ on }\partial \Omega ,%
\end{array}%
\right.
\end{equation*}%
belong to $C_{0}^{1,\sigma }(\overline{\Omega })\times C_{0}^{1,\sigma }(%
\overline{\Omega })$ and satisfy the estimate $\left\Vert u\right\Vert
_{C^{1,\sigma }(\overline{\Omega })}<R$. Moreover, there is a constant $%
k_{p}>0,$ depending only on $p$ and $\Omega ,$ such that 
\begin{equation*}
\Vert \nabla u\Vert _{\infty }\leq k_{p}||h(\cdot ,u,\nabla u)\Vert _{r}^{%
\frac{1}{p-1}}.
\end{equation*}
\end{corollary}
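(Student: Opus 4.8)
The corollary is simply the scalar specialization of Theorem~\ref{T4}, so the strategy is to deduce it directly by applying that theorem to a one-equation (decoupled) version of system $(\mathrm{P}_h)$. First I would set up the reduction: given the scalar datum $h:\Omega\times\mathbb{R}\times\mathbb{R}^{N}\to\mathbb{R}$ with $|h(x,s,\xi)|\le M(d(x)^{\mu}+|\xi|^{\gamma})$ and $r>N$, $-1/r<\mu<0\le\gamma<(p-1)/r$, define $p_{1}=p_{2}=p$, $r_{1}=r_{2}=r$, $\mu_{1}=\mu_{2}=\mu$, $\gamma_{1}=\gamma_{2}=\gamma$, and $\theta_{1}=\theta_{2}=0$ (the exponent $0$ is admissible since $(\mathrm{H}_h)$ only asks $\theta_{i}\ge 0$ and $0<(p-1)/r$ holds trivially). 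Then set $h_{1}(x,s_{1},s_{2},\xi_{1},\xi_{2}):=h(x,s_{1},\xi_{1})$ and $h_{2}(x,s_{1},s_{2},\xi_{1},\xi_{2}):=h(x,s_{2},\xi_{2})$. These are Carath\'eodory functions (as coordinate restrictions of one), and the bound in $(\mathrm{H}_h)$ holds because $|h_{i}|\le M(d(x)^{\mu}+|\xi_{\pi(i)}|^{\gamma})\le M(d(x)^{\mu}+|\xi_{1}|^{\gamma}+|\xi_{2}|^{0})$ after noting $|\xi_{2}|^{0}=1\ge 0$; one only has to be slightly careful that the term $|\xi|^{0}\equiv 1$ does not spoil the inequality, which it does not since we may absorb constants into $M$ (or equivalently enlarge $M$ to $M+1$ at the outset). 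The growth restrictions \eqref{5} are exactly the hypotheses of the corollary.

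Second, I would observe that if $u\in W_{0}^{1,p}(\Omega)$ solves the scalar Dirichlet problem $-\Delta_{p}u=h(x,u,\nabla u)$, $u=0$ on $\partial\Omega$, then the pair $(u,u)$ solves $(\mathrm{P}_{h})$ with the above $h_{1},h_{2}$: indeed $-\Delta_{p}u=h(x,u,\nabla u)=h_{1}(x,u,u,\nabla u,\nabla u)$ and likewise for the second equation, while the boundary conditions are inherited. Theorem~\ref{T4} then immediately yields $\|\nabla u\|_{\infty}\le k_{p}\|h_{1}(\cdot,u,u,\nabla u,\nabla u)\|_{r_{1}}^{1/(p_{1}-1)}=k_{p}\|h(\cdot,u,\nabla u)\|_{r}^{1/(p-1)}$, with $k_{p}$ depending only on $p$ and $\Omega$ (since $p_{1}=p_{2}=p$), which is the gradient bound claimed. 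It also gives constants $R>0$ and $\sigma\in(0,1)$, depending only on the data, such that $(u,u)\in C_{0}^{1,\sigma}(\overline{\Omega})\times C_{0}^{1,\sigma}(\overline{\Omega})$ and $\|u\|_{C^{1,\sigma}(\overline{\Omega})}<R$; in particular $u\in C_{0}^{1,\sigma}(\overline{\Omega})$, which is the regularity assertion.

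The only genuine subtlety—and the step I would flag as the main (minor) obstacle—is making sure the uniform constants $R$ and $\sigma$ produced by Theorem~\ref{T4} really are \emph{a priori}, i.e.\ independent of the particular solution $u$. Inspecting the proof of Theorem~\ref{T4}, the constant $c_{0}$ in \eqref{46} comes from the energy estimate and depends only on $M$, the exponents, and $\Omega$; the constant $c_{1}$ in \eqref{2'} depends on the same data via the Hardy--Sobolev inequality and $(\mathrm{H}_h)$; and then $k_{p}$, $\hat M$, $R$, $\sigma$ are all built from these, hence uniform. So tracking that nothing in the chain \eqref{43}--\eqref{regu} secretly depends on $u$ is the real content, but it is exactly what Theorem~\ref{T4} already certifies. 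A cosmetic fix in the statement—the conclusion should read $u\in C_{0}^{1,\sigma}(\overline{\Omega})$ rather than the product space (a typo carried over from the system case)—can be noted in passing. With the reduction in place, the proof is a two-line invocation of Theorem~\ref{T4}.
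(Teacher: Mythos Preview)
Your approach is exactly the paper's: the corollary is stated there as the scalar case of Theorem~\ref{T4}, with no separate proof given, so invoking Theorem~\ref{T4} on the diagonal pair $(u,u)$ is the intended argument.

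There is, however, a small slip in your exponent bookkeeping. With your choice $\gamma_{1}=\gamma_{2}=\gamma$ and $\theta_{1}=\theta_{2}=0$, the bound required by $(\mathrm{H}_h)$ for $i=2$ reads $|h_{2}|\le M\bigl(d(x)^{\mu}+|\xi_{1}|^{\gamma}+|\xi_{2}|^{0}\bigr)$, i.e.\ the $\gamma$-power falls on $\xi_{1}$, not $\xi_{2}$; but your $h_{2}(x,s_{1},s_{2},\xi_{1},\xi_{2})=h(x,s_{2},\xi_{2})$ depends on $\xi_{2}$, so the estimate $|h(x,s_{2},\xi_{2})|\le M\bigl(d(x)^{\mu}+|\xi_{1}|^{\gamma}+1\bigr)$ is false in general. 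The fix is immediate: either take $\gamma_{2}=0$, $\theta_{2}=\gamma$ (so that for $i=2$ the $\gamma$-power sits on $\xi_{2}$, and \eqref{5} still holds since $\max\{0,\gamma\}=\gamma<(p-1)/r$), or more simply set $h_{1}=h_{2}:=h(x,s_{1},\xi_{1})$, in which case both satisfy $(\mathrm{H}_h)$ with $\gamma_{i}=\gamma$, $\theta_{i}=0$ and the pair $(u,u)$ still solves the system. With this correction your reduction goes through verbatim.
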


\section{Comparison properties}

\label{S3}

Let $y_{i},z_{i}\in C^{1}(\overline{\Omega })$ satisfy%
\begin{equation}
-\Delta _{p_{i}}y_{i}(x)=1+d(x)^{\alpha _{i}+\beta _{i}}\text{ \ in }\Omega ,%
\text{ }y_{i}(x)=0\text{ \ on }\partial \Omega ,  \label{7}
\end{equation}%
and%
\begin{equation}
-\Delta _{p_{i}}z_{i}(x)=\left\{ 
\begin{array}{ll}
d(x)^{\alpha _{i}+\beta _{i}} & \text{in }\Omega \backslash \overline{\Omega 
}_{\delta } \\ 
-1 & \text{in }\Omega _{\delta }%
\end{array}%
\right. ,\text{ }z_{i}(x)=0\text{ \ on }\partial \Omega  \label{9}
\end{equation}%
where $\Omega _{\delta }:=\{x\in \Omega :d(x)<\delta \}$, with a fixed $%
\delta >0$ sufficiently small, and the exponents $\alpha _{i},\beta _{i}$
satisfy 
\begin{equation}
-1<\alpha _{i}+\beta _{i}<p_{i}-1,\text{ for }i=1,2.  \label{11}
\end{equation}%
Hardy-Sobolev inequality (see, e.g., \cite[Lemma 2.3]{AC}) guarantees that
the right-hand sides of (\ref{7}) and (\ref{9}) belong to $%
W^{-1,p_{i}^{\prime }}(\Omega )$. Consequently, Minty-Browder Theorem (see 
\cite[Theorem V.15]{Brezis}) ensures the existence of unique $y_{i}$ and $%
z_{i}$ in (\ref{9}) and (\ref{7}).

\begin{lemma}
\label{L2}There are positive constants $c_{0}$ and $c_{1}$ such that%
\begin{equation}
c_{0}d(x)\leq z_{i}(x)\leq y_{i}(x)\leq c_{1}d(x)\text{ \ for all }x\in
\Omega ,i=1,2.  \label{4}
\end{equation}
\end{lemma}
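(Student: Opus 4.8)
The plan is to establish the chain of inequalities in \eqref{4} by a sequence of weak comparison principle arguments for the $p_i$-Laplacian, exploiting the classical fact that for any $f \in W^{-1,p_i'}(\Omega)$ with $f \geq 0$ (resp. $f \leq 0$), the solution of $-\Delta_{p_i} w = f$, $w = 0$ on $\partial\Omega$ satisfies $w \geq 0$ (resp. $w \leq 0$) in $\Omega$, and more generally that $-\Delta_{p_i} w_1 \leq -\Delta_{p_i} w_2$ in $W^{-1,p_i'}(\Omega)$ together with $w_1 \leq w_2$ on $\partial\Omega$ implies $w_1 \leq w_2$ in $\Omega$. Throughout, $d(x) = \operatorname{dist}(x,\partial\Omega)$, and I will use that by standard regularity (e.g.\ the result of Lieberman / Tolksdorf) $y_i, z_i \in C^1(\overline{\Omega})$, with the Hopf-type boundary lemma giving that the torsion-type function $e_i$ solving $-\Delta_{p_i} e_i = 1$, $e_i = 0$ on $\partial\Omega$ satisfies $\tilde c_0 d(x) \leq e_i(x) \leq \tilde c_1 d(x)$ for positive constants.

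First I would prove $z_i \leq y_i$. Subtracting \eqref{9} from \eqref{7}, observe that in $\Omega \setminus \overline{\Omega}_\delta$ we have $-\Delta_{p_i} y_i = 1 + d(x)^{\alpha_i+\beta_i} \geq d(x)^{\alpha_i+\beta_i} = -\Delta_{p_i} z_i$, while in $\Omega_\delta$ we have $-\Delta_{p_i} y_i = 1 + d(x)^{\alpha_i+\beta_i} \geq 0 > -1 = -\Delta_{p_i} z_i$ (here $d(x)^{\alpha_i+\beta_i} \geq 0$ always since $d > 0$ in $\Omega$). Hence $-\Delta_{p_i} z_i \leq -\Delta_{p_i} y_i$ in $W^{-1,p_i'}(\Omega)$, and since $z_i = y_i = 0$ on $\partial\Omega$, the weak comparison principle yields $z_i \leq y_i$ in $\Omega$. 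Next, for the upper bound $y_i(x) \leq c_1 d(x)$: since $-1 < \alpha_i + \beta_i$, Hardy--Sobolev shows $d^{\alpha_i+\beta_i} \in W^{-1,p_i'}(\Omega)$, but to get the pointwise bound I would instead dominate the right-hand side by a constant. This is the delicate point when $\alpha_i + \beta_i < 0$, because $d^{\alpha_i+\beta_i}$ blows up near $\partial\Omega$; here I would use that $d^{\alpha_i+\beta_i} \leq \lambda\bigl(1 + d^{\alpha_i+\beta_i}\bigr)$ is not bounded, so instead I compare $y_i$ with a multiple of $e_i$ by noting that $1 + d^{\alpha_i+\beta_i} \leq -\Delta_{p_i}(\kappa e_i) = \kappa^{p_i-1}$ fails near the boundary as well. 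The correct route is: choose the comparison function $w = A e_i$ for the part away from the boundary and handle $\Omega_\delta$ separately, or — cleaner — invoke \cite[Lemma 3.1]{Hai} / the regularity Theorem~\ref{T4}: since $1 + d^{\alpha_i+\beta_i} \leq \hat M(d^{\alpha_i+\beta_i} + 1)$ with $\alpha_i+\beta_i > -1 > -1/r_i$ after possibly shrinking, the right-hand side is an admissible $h_i$ (independent of $u,v,\nabla u,\nabla v$), so $y_i \in C_0^{1,\sigma}(\overline{\Omega})$, and $y_i = 0$ on $\partial\Omega$ with $y_i \in C^1(\overline{\Omega})$ forces $|y_i(x)| \leq \|\nabla y_i\|_\infty d(x) =: c_1 d(x)$.

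For the lower bound $c_0 d(x) \leq z_i(x)$, the issue is that the right-hand side of \eqref{9} is negative in $\Omega_\delta$, so positivity of $z_i$ is not automatic; this is the main obstacle. The strategy is to choose $\delta > 0$ small enough that the positive contribution $d^{\alpha_i+\beta_i}$ on $\Omega \setminus \overline{\Omega}_\delta$ dominates. Concretely, I would compare $z_i$ from below with $\eta e_i - \text{(small correction)}$, or better: write $z_i = z_i^{(1)} + z_i^{(2)}$ is not linear for $p_i \neq 2$, so instead argue directly. Let $\phi$ be the first eigenfunction, or simply use $e_i$: one shows that for $\delta$ small, $-\Delta_{p_i} z_i \geq -\Delta_{p_i}(c_0 e_i)$ is \emph{false} near the boundary, so a direct global comparison fails. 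The actual argument, which I expect the authors use, is a two-step barrier: (i) show $z_i \geq 0$ in $\Omega$ by testing the equation for $z_i$ with $z_i^- = \max(-z_i,0)$ and using that the negative part of the datum is supported only in the thin strip $\Omega_\delta$ where $d^{\alpha_i+\beta_i}$ is large and positive on the complementary set, combined with a Poincaré/Hardy estimate on $\Omega_\delta$ whose constant is small when $\delta$ is small, yielding $z_i^- = 0$; (ii) once $z_i \geq 0$ and $z_i \in C^1(\overline{\Omega})$ with $z_i = 0$ on $\partial\Omega$, apply the Hopf boundary point lemma for the $p_i$-Laplacian (using that $-\Delta_{p_i} z_i = d^{\alpha_i+\beta_i} > 0$ near $\partial\Omega$ on the set where it is positive, and more carefully on $\Omega \setminus \overline{\Omega}_{\delta}$) to conclude $\partial z_i/\partial \nu < 0$ on $\partial\Omega$, hence $z_i(x) \geq c_0 d(x)$. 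Assembling (i), (ii), the bound $z_i \leq y_i$, and $y_i \leq c_1 d$ gives the full chain \eqref{4} with constants uniform in $i = 1,2$ by taking the worst of the two.
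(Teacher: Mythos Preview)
Your treatment of $z_i\leq y_i$ matches the paper's. For the upper bound $y_i\leq c_1 d$, your route is actually \emph{different} from the paper's and in a sense cleaner: since the authors already declare $y_i\in C^1(\overline\Omega)$ with $y_i=0$ on $\partial\Omega$, the Mean Value Theorem immediately gives $|y_i(x)|\leq \|\nabla y_i\|_\infty\, d(x)$, which is exactly what you end up invoking. The paper instead splits into cases: when $\alpha_i+\beta_i\geq 0$ it compares $y_i$ with a multiple of the torsion function $\hat w_i$ (and then uses the MVT on $\hat w_i$), and when $\alpha_i+\beta_i<0$ it compares $y_i$ with a multiple of the solution $w_i$ of the singular scalar problem $-\Delta_{p_i}w_i=w_i^{-\gamma}$ (with $|\alpha_i+\beta_i|<\gamma<1$), using the known two-sided estimate $c_2 d\leq w_i\leq c_3 d$ from \cite{GST}. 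Your shortcut bypasses this auxiliary comparison entirely; the paper's argument, on the other hand, does not rely on having already established the $C^{1}$ bound for $y_i$ in the singular case. (One small slip in your write-up: the chain ``$\alpha_i+\beta_i>-1>-1/r_i$'' is backwards, since $r_i>1$ gives $-1<-1/r_i$.)

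The genuine gap is in your lower bound $c_0 d\leq z_i$. Recall that $\Omega_\delta=\{d<\delta\}$ is the \emph{boundary} strip, so in \eqref{9} the right-hand side equals $-1$ precisely near $\partial\Omega$ and equals $d^{\alpha_i+\beta_i}>0$ only in the interior $\Omega\setminus\overline{\Omega_\delta}$. Consequently your step~(ii) cannot be carried out as written: the standard Hopf boundary lemma for the $p_i$-Laplacian requires $-\Delta_{p_i}z_i\geq 0$ (or a suitable lower bound of that type) in a neighbourhood of $\partial\Omega$, whereas here $-\Delta_{p_i}z_i=-1<0$ on $\Omega_\delta$, so even if you knew $z_i>0$ in $\Omega$ you could not conclude $\partial_\nu z_i<0$ on $\partial\Omega$ this way. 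Your step~(i) is also incomplete: testing with $z_i^{-}$ yields
\[
\|\nabla z_i^{-}\|_{p_i}^{p_i}+\int_{\Omega\setminus\overline{\Omega_\delta}} d^{\alpha_i+\beta_i} z_i^{-}\,dx=\int_{\Omega_\delta} z_i^{-}\,dx,
\]
and a Poincar\'e inequality on $\Omega_\delta$ would help only if you already knew $\mathrm{supp}\,z_i^{-}\subset\Omega_\delta$, which is not a priori true; as written you only get a bound on $\|\nabla z_i^{-}\|_{p_i}$, not its vanishing. This is exactly the semipositone difficulty, and the paper does not attempt an ad hoc argument: it invokes the Strong Maximum Principle together with \cite[Corollary~3.1]{Hai}, a result tailored to this situation, to obtain $z_i\geq c_0 d$ for $\delta$ small. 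You should either cite that result or supply a genuine barrier/comparison argument that copes with the negative datum near the boundary.
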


\begin{proof}
By (\ref{7}) and (\ref{9}), it is readily seen that $z_{i}(x)\leq y_{i}(x)$
for all $x\in \Omega ,$ for $i=1,2.$ Moreover, the Strong Maximum Principle
together with \cite[Corollary 3.1]{Hai} entail $c_{0}d(x)\leq z_{i}(x)$ in $%
\Omega ,$ for $\delta >0$ sufficiently small in (\ref{9}). Thus, the proof
is completed by showing the last inequality in (\ref{4}). To this end, let
consider the unique solution $\hat{w}_{i}\in C_{0}^{1,\tau }(\overline{%
\Omega })$ homogeneous Dirichlet problem 
\begin{equation}
-\Delta _{p_{i}}\hat{w}_{i}=1\text{ in }\Omega ,\text{ }\hat{w}_{i}=0\text{
on }\partial \Omega   \label{5bis}
\end{equation}%
that satisfies 
\begin{equation}
\Vert \nabla \hat{w}_{i}\Vert _{\infty }\leq \hat{L},\text{ for certain
constant }\hat{L}>0,\text{ }i=1,2\text{.}  \label{56}
\end{equation}%
Since $\partial \Omega $ is smooth, we can find $\delta >0$ and $\Pi \in
C^{1}(\Omega _{\delta },\partial \Omega )$ fulfilling 
\begin{equation}
d(x)=|x-\Pi (x)|,\;\;\frac{x-\Pi (x)}{|x-\Pi (x)|}=-\eta (\Pi (x)),\;\;]\Pi
(x),x]\subseteq \Omega ,\;\;x\in \Omega _{\delta },  \label{propPi}
\end{equation}%
where $\eta (x)$ indicates the outward unit normal vector to $\partial
\Omega $ at its point $x$. Thus, the Mean Value Theorem, when combined with (%
\ref{56}), lead to 
\begin{equation}
|\hat{w}_{i}(x)|=|\hat{w}_{i}(x)-\hat{w}_{i}(\Pi (x))|\leq \hat{L}|x-\Pi
(x)|=\hat{L}d(x)\quad \forall \,x\in \Omega _{\delta }\,.  \label{MVT1}
\end{equation}%
Define 
\begin{equation*}
L:=\max \left\{ \hat{L},\,\max_{\Omega \setminus \Omega _{\delta }}\frac{%
\hat{w}_{i}}{d}\,,\,i=1,2\right\} .
\end{equation*}%
On account of (\ref{MVT1}), one evidently has 
\begin{equation}
\hat{w}_{i}\leq Ld(x)\text{ in }\Omega .  \label{58}
\end{equation}%
Let us ssume that $\alpha _{i}+\beta _{i}\geq 0$ in (\ref{11}). Then, there
is a constant $\hat{c}>0$ such that $1+d(x)^{\alpha _{i}+\beta _{i}}\leq 
\hat{c}$. From (\ref{5bis}) and (\ref{7}) one derives that%
\begin{equation*}
-\Delta _{p_{i}}(\hat{c}^{\frac{-1}{p_{i}-1}}y_{i})\leq -\Delta _{p_{i}}\hat{%
w}_{i}\text{ in }\Omega ,
\end{equation*}%
which, by (\ref{58}) and the weak comparison principle (see \cite[Lemma 3.1]%
{T}), implies that $y_{i}(x)\leq c_{1}d(x)$ in $\Omega ,$ for certain
constant $c_{1}$. Now, assume that $\alpha _{i}+\beta _{i}<0$ in (\ref{11})
and let $w_{i}\in C^{1}(\overline{\Omega }),$ be the unique weak solution of
Dirichlet problem 
\begin{equation}
\left\{ 
\begin{array}{ll}
-\Delta _{p_{i}}w_{i}=w_{i}^{-\gamma } & \text{in }\Omega , \\ 
w_{i}>0 & \text{in }\Omega , \\ 
w_{i}=0 & \text{on }\partial \Omega 
\end{array}%
\right. ,i=1,2,  \label{20}
\end{equation}%
where 
\begin{equation}
\max_{i=1,2}|\alpha _{i}+\beta _{i}|<\gamma <1.  \label{65}
\end{equation}%
It is well known that%
\begin{equation}
c_{2}d(x)\leq w_{i}(x)\leq c_{3}d(x),\text{ }i=1,2  \label{6}
\end{equation}%
with positive constants $c_{2},c_{3}$ (see \cite{GST}). Due to (\ref{65}),
one can find a constant $L>0$ such that%
\begin{equation}
\max_{i=1,2}(1+d(x)^{\alpha _{i}+\beta _{i}})d(x)^{\gamma }\leq L\text{ \ in 
}\overline{\Omega }.  \label{8}
\end{equation}

On account of (\ref{7}), (\ref{8}) and (\ref{6}), it follows that%
\begin{equation*}
\begin{array}{l}
-\Delta _{p_{i}}y_{i}(x)=1+d(x)^{\alpha _{i}+\beta _{i}}\leq Ld(x)^{-\gamma }
\\ 
\leq L(c_{3}w_{i})^{-\gamma }=-\Delta _{p_{i}}((Lc_{3}^{-\gamma })^{\frac{1}{%
p_{i}-1}}w_{i}(x))\text{ \ in }\Omega \text{.}%
\end{array}%
\end{equation*}%
Then, the weak comparison principle (see \cite[Lemma 3.1]{T}) leads to%
\begin{equation*}
\begin{array}{c}
y_{i}(x)\leq (Lc_{3}^{-\gamma })^{\frac{1}{p_{i}-1}}w_{i}(x)\leq L^{\frac{1}{%
p_{i}-1}}c_{3}^{1-\frac{\gamma }{p_{i}-1}}d(x)\text{ \ in }\Omega .%
\end{array}%
\end{equation*}%
This ends the proof.
\end{proof}

\mathstrut

Set%
\begin{equation}
(\underline{u},\underline{v})=C^{-1}(z_{1},z_{2})\quad \text{and}\quad (%
\overline{u},\overline{v})=C(y_{1},y_{2})  \label{sub-super}
\end{equation}%
where $C>1$ is a constant. Obviously, $\underline{u}\leqslant \underline{v}$
and $\overline{u}\leqslant \overline{v}$ in $\overline{\Omega }.$

The following result allows us to achieve useful comparison properties.

\begin{proposition}
\label{P1}Assume (\ref{11}) holds true and let $\gamma _{i},\theta _{i}\geq
0 $ be real constants such that%
\begin{equation}
\max \{\gamma _{i},\theta _{i}\}<p_{i}-1,\text{ \ for }i=1,2.  \label{c}
\end{equation}%
Then, for $C>0$ sufficiently large in (\ref{sub-super}), it holds%
\begin{equation*}
\begin{array}{l}
-\Delta _{p_{1}}\underline{u}\leq m_{1}w_{1}^{\alpha _{1}}w_{2}^{\beta _{1}}%
\text{\ \ in }\Omega ,%
\end{array}%
\end{equation*}%
\begin{equation*}
-\Delta _{p_{2}}\underline{v}\leq m_{2}w_{1}^{\alpha _{2}}w_{2}^{\beta _{2}}%
\text{ \ in }\Omega ,
\end{equation*}%
\begin{equation*}
\begin{array}{l}
-\Delta _{p_{1}}\overline{u}\geq M_{1}w_{1}^{\alpha _{1}}w_{2}^{\beta
_{1}}+2C^{\max \{\gamma _{1},\theta _{1}\}}\text{\ \ in }\Omega%
\end{array}%
\end{equation*}%
and%
\begin{equation*}
\begin{array}{l}
-\Delta _{p_{2}}\overline{v}\geq M_{2}w_{1}^{\alpha _{2}}w_{2}^{\beta
_{2}}+2C^{\max \{\gamma _{2},\theta _{2}\}}\text{ \ in }\Omega ,%
\end{array}%
\end{equation*}%
whenever $(w_{1},w_{2})\in \lbrack \underline{u},\overline{u}]\times \lbrack 
\underline{v},\overline{v}]$.
\end{proposition}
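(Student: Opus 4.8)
The plan is to combine the $(p_i-1)$-homogeneity of the operator, namely $-\Delta_{p_i}(t\phi)=t^{p_i-1}(-\Delta_{p_i}\phi)$ for $t>0$ (read in the weak sense, which is legitimate since $y_i,z_i\in C^1(\overline\Omega)$ and the data in \eqref{7} and \eqref{9} lie in $W^{-1,p_i'}(\Omega)$), with the two-sided bound \eqref{4} of Lemma \ref{L2}. All four inequalities are handled by one and the same scheme: one writes the left-hand side explicitly via homogeneity together with \eqref{7} or \eqref{9}; one estimates $w_1^{\alpha_i}w_2^{\beta_i}$ from below (for the sub-solution inequalities) or from above (for the super-solution ones) through \eqref{4}; and one then chooses $C>1$ large enough that the residual powers of $C$ point in the right direction. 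Throughout, $\Omega$ is decomposed into $\Omega_\delta$ and $\Omega\setminus\overline\Omega_\delta$.

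For the sub-solution estimates, homogeneity and \eqref{9} give $-\Delta_{p_1}\underline u=C^{-(p_1-1)}(-\Delta_{p_1}z_1)$, equal to $C^{-(p_1-1)}d(x)^{\alpha_1+\beta_1}$ on $\Omega\setminus\overline\Omega_\delta$ and to $-C^{-(p_1-1)}$ on $\Omega_\delta$; similarly for $\underline v$. On $\Omega_\delta$ the left-hand side is negative while $m_1w_1^{\alpha_1}w_2^{\beta_1}>0$, because \eqref{4} forces $w_1\ge\underline u\ge C^{-1}c_0\,d(x)>0$ and $w_2\ge\underline v\ge C^{-1}c_0\,d(x)>0$; hence the inequality is trivially true there. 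On $\Omega\setminus\overline\Omega_\delta$, \eqref{4} gives $C^{-1}c_0\,d(x)\le w_j\le Cc_1\,d(x)$ for $j=1,2$, and distinguishing the signs of $\alpha_1$ and $\beta_1$ one gets, in every case, a lower bound $w_1^{\alpha_1}w_2^{\beta_1}\ge\kappa_1\,C^{-(|\alpha_1|+|\beta_1|)}d(x)^{\alpha_1+\beta_1}$, with $\kappa_1>0$ depending only on $c_0,c_1,\alpha_1,\beta_1$. Then $-\Delta_{p_1}\underline u\le m_1w_1^{\alpha_1}w_2^{\beta_1}$ follows as soon as $C^{-(p_1-1)}\le m_1\kappa_1\,C^{-(|\alpha_1|+|\beta_1|)}$, which holds for $C$ large since $p_1-1$ dominates the exponent on the right (when $\alpha_1,\beta_1\ge0$ one simply has $|\alpha_1|+|\beta_1|=\alpha_1+\beta_1<p_1-1$ by \eqref{11}). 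The inequality for $\underline v$ is obtained identically by exchanging the indices.

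For the super-solution estimates, homogeneity and \eqref{7} give, this time on all of $\Omega$, $-\Delta_{p_1}\overline u=C^{p_1-1}\bigl(1+d(x)^{\alpha_1+\beta_1}\bigr)=C^{p_1-1}+C^{p_1-1}d(x)^{\alpha_1+\beta_1}$. Since $\max\{\gamma_1,\theta_1\}<p_1-1$ by \eqref{c}, we have $C^{p_1-1}\ge2C^{\max\{\gamma_1,\theta_1\}}$ for $C$ large, which takes care of the additive constant in the target inequality. For the singular term, \eqref{4} yields (again splitting on the signs of $\alpha_1,\beta_1$) an upper bound $M_1w_1^{\alpha_1}w_2^{\beta_1}\le M_1\kappa_1'\,C^{|\alpha_1|+|\beta_1|}d(x)^{\alpha_1+\beta_1}$, so that $C^{p_1-1}d(x)^{\alpha_1+\beta_1}\ge M_1w_1^{\alpha_1}w_2^{\beta_1}$ as soon as $C^{p_1-1}\ge M_1\kappa_1'\,C^{|\alpha_1|+|\beta_1|}$, true for $C$ large. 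Adding the two bounds gives $-\Delta_{p_1}\overline u\ge M_1w_1^{\alpha_1}w_2^{\beta_1}+2C^{\max\{\gamma_1,\theta_1\}}$, and the estimate for $\overline v$ follows by the same computation with indices exchanged. Finally one picks $C>1$ larger than all the finitely many thresholds produced by the four inequalities.

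I expect the main obstacle to lie in the bookkeeping of the powers of $C$ in the mixed-sign regime, where one of $\alpha_i,\beta_i$ is positive and the other negative — a configuration occurring both in the cooperative and in the competitive settings the paper wishes to treat. There the two-sided use of \eqref{4} costs separate powers $C^{\pm|\alpha_i|}$ and $C^{\pm|\beta_i|}$, and one must verify that these are still dominated by the gain $C^{\mp(p_i-1)}$ coming from homogeneity, using that $p_i-1$ exceeds the combined exponents; when moreover $\alpha_i+\beta_i<0$, one uses that near $\partial\Omega$ both sides blow up like $d(x)^{\alpha_i+\beta_i}$ at the very same rate, so the comparison again reduces to a pure inequality between powers of $C$. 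The remaining ingredients — that $d(x)^{\alpha_i+\beta_i}$ stays between two positive constants on $\Omega\setminus\overline\Omega_\delta$, that $w_1,w_2>0$ so all the powers make sense, and the routine case split on the signs of $\alpha_i$ and $\beta_i$ — present no difficulty.
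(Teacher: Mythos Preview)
Your proposal is correct and follows essentially the same route as the paper's proof: both compute $-\Delta_{p_i}\underline u,\,-\Delta_{p_i}\overline u$ via the $(p_i-1)$-homogeneity of the operator together with \eqref{7}--\eqref{9}, obtain two-sided bounds $w_1^{\alpha_i}w_2^{\beta_i}\gtrless \mathrm{const}\cdot C^{\mp(|\alpha_i|+|\beta_i|)}d(x)^{\alpha_i+\beta_i}$ from Lemma~\ref{L2} by splitting on the signs of $\alpha_i,\beta_i$, and then close by choosing $C>1$ large. The mixed-sign bookkeeping you flag as the ``main obstacle'' is handled in the paper exactly as you outline (compare \eqref{50}--\eqref{51}), and the paper, like you, tacitly relies on $|\alpha_i|+|\beta_i|<p_i-1$ without isolating it as a separate hypothesis.
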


\begin{proof}
On the basis of (\ref{sub-super}) and Lemma \ref{L2}, for all $%
(w_{1},w_{2})\in \lbrack \underline{u},\overline{u}]\times \lbrack 
\underline{v},\overline{v}],$ on has%
\begin{eqnarray}
w_{1}^{\alpha _{i}}w_{2}^{\beta _{i}} &\geq &\left\{ 
\begin{array}{ll}
\underline{u}^{\alpha _{i}}\underline{v}^{\beta _{i}} & \text{if }\alpha
_{i},\beta _{i}>0 \\ 
\overline{u}^{\alpha _{i}}\underline{v}^{\beta _{i}} & \text{if }\alpha
_{i}<0<\beta _{i} \\ 
\underline{u}^{\alpha _{i}}\overline{v}^{\beta _{i}} & \text{if }\beta
_{i}<0<\alpha _{i} \\ 
\overline{u}^{\alpha _{i}}\overline{v}^{\beta _{i}} & \text{if }\alpha
_{i},\beta _{i}<0%
\end{array}%
\right.  \label{50} \\
&\geq &\left\{ 
\begin{array}{ll}
(C^{-1}c_{0}d(x))^{\alpha _{i}+\beta _{i}} & \text{if }\alpha _{i},\beta
_{i}>0 \\ 
(Cc_{1})^{\alpha _{i}}(C^{-1}c_{0})^{\beta _{i}}d(x)^{\alpha _{i}+\beta _{i}}
& \text{if }\alpha _{i}<0<\beta _{i} \\ 
(C^{-1}c_{0})^{\alpha _{i}}(Cc_{1})^{\beta _{i}}d(x)^{\alpha _{i}+\beta _{i}}
& \text{if }\beta _{i}<0<\alpha _{i} \\ 
(Cc_{1})^{\alpha _{i}+\beta _{i}}d(x)^{\alpha _{i}+\beta _{i}} & \text{if }%
\alpha _{i},\beta _{i}<0%
\end{array}%
\right.  \notag \\
&\geq &\tilde{c}_{0}C^{-(|\alpha _{i}|+|\beta _{i}|)}d(x)^{\alpha _{i}+\beta
_{i}}  \notag
\end{eqnarray}%
and%
\begin{eqnarray}
w_{1}^{\alpha _{i}}w_{2}^{\beta _{i}} &\leq &\left\{ 
\begin{array}{ll}
\overline{u}^{\alpha _{i}}\overline{v}^{\beta _{i}} & \text{if }\alpha
_{i},\beta _{i}>0 \\ 
\underline{u}^{\alpha _{i}}\overline{v}^{\beta _{i}} & \text{if }\alpha
_{i}<0<\beta _{i} \\ 
\overline{u}^{\alpha _{i}}\underline{v}^{\beta _{i}} & \text{if }\beta
_{i}<0<\alpha _{i} \\ 
\underline{u}^{\alpha _{i}}\underline{v}^{\beta _{i}} & \text{if }\alpha
_{i},\beta _{i}<0%
\end{array}%
\right.  \label{12} \\
&\leq &\left\{ 
\begin{array}{ll}
(Cc_{1}d(x))^{\alpha _{i}+\beta _{i}} & \text{if }\alpha _{i},\beta _{i}>0
\\ 
(C^{-1}c_{0})^{\alpha _{i}}(Cc_{1})^{\beta _{i}}d(x)^{\alpha _{i}+\beta _{i}}
& \text{if }\alpha _{i}<0<\beta _{i} \\ 
(Cc_{1})^{\alpha _{i}}(C^{-1}c_{0})^{\beta _{i}}d(x)^{\alpha _{i}+\beta _{i}}
& \text{if }\beta _{i}<0<\alpha _{i} \\ 
(C^{-1}c_{0})^{\alpha _{i}}(C^{-1}c_{0})^{\beta _{i}}d(x)^{\alpha _{i}+\beta
_{i}} & \text{if }\alpha _{i},\beta _{i}<0%
\end{array}%
\right.  \notag \\
&\leq &\tilde{c}_{1}C^{|\alpha _{i}|+|\beta _{i}|}d(x)^{\alpha _{i}+\beta
_{i}},  \notag
\end{eqnarray}%
where $\tilde{c}_{0},\tilde{c}_{1}$ are positive constants depending on $%
c_{0},c_{1},\alpha _{i},\beta _{i}$. From (\ref{sub-super}), (\ref{9}), (\ref%
{7}) and (\ref{c}), one gets 
\begin{equation}
\begin{array}{l}
-\Delta _{p_{1}}\underline{u}=C^{-(p_{1}-1)}\left\{ 
\begin{array}{ll}
d(x)^{\alpha _{1}+\beta _{1}} & \text{in }\Omega \backslash \overline{\Omega 
}_{\delta } \\ 
-1 & \text{in }\Omega _{\delta }%
\end{array}%
\right. \\ 
\leq \tilde{c}_{0}m_{1}C^{-(|\alpha _{1}|+|\beta _{1}|)}d(x)^{\alpha
_{1}+\beta _{1}}\text{ in }\Omega ,%
\end{array}%
\end{equation}%
\begin{equation}
\begin{array}{l}
-\Delta _{p_{2}}\underline{v}=C^{-(p_{2}-1)}\left\{ 
\begin{array}{ll}
d(x)^{\alpha _{2}+\beta _{2}} & \text{in }\Omega \backslash \overline{\Omega 
}_{\delta } \\ 
-1 & \text{in }\Omega _{\delta }%
\end{array}%
\right. \\ 
\leq \tilde{c}_{0}m_{2}C^{-(|\alpha _{2}|+|\beta _{2}|)}d(x)^{\alpha
_{2}+\beta _{2}}\text{ in }\Omega ,%
\end{array}%
\end{equation}%
\begin{equation}
\begin{array}{l}
-\Delta _{p_{1}}\overline{u}=C^{p_{1}-1}\left( 1+d(x)^{\alpha _{1}+\beta
_{1}}\right) \\ 
\geq \tilde{c}_{1}M_{1}C^{|\alpha _{1}|+|\beta _{1}|}d(x)^{\alpha _{1}+\beta
_{1}}+2C^{\max \{\gamma _{1},\theta _{1}\}}\text{ \ in }\Omega%
\end{array}%
\end{equation}%
and%
\begin{equation}
\begin{array}{l}
-\Delta _{p_{2}}\overline{v}=C^{p_{2}-1}(1+d(x)^{\alpha _{2}+\beta _{2}}) \\ 
\geq \tilde{c}_{1}M_{2}C^{|\alpha _{2}|+|\beta _{2}|}d(x)^{\alpha _{2}+\beta
_{2}}+2C^{\max \{\gamma _{2},\theta _{2}\}}\text{ \ in }\Omega ,%
\end{array}
\label{51}
\end{equation}%
provided $C>1$ sufficiently large. Consequently, combining (\ref{50})-(\ref%
{51}) the desired estimates follow.
\end{proof}

\section{Existence of solutions}

\label{S4}

By using the functions in (\ref{sub-super}), let us introduce the$\mathcal{\ 
}$set%
\begin{equation*}
\begin{array}{l}
\mathcal{O}_{C}:=\left\{ 
\begin{array}{c}
(\mathrm{u},\mathrm{v})\in C_{0}^{1}(\overline{\Omega })^{2}:\underline{u}%
\leqslant \mathrm{u}\leqslant \overline{u},\text{ }\underline{v}\leqslant 
\mathrm{v}\leqslant \overline{v}\text{ in }\Omega \\ 
and\text{ }\Vert \nabla \mathrm{u}\Vert _{\infty },\Vert \nabla \mathrm{v}%
\Vert _{\infty }\leqslant C%
\end{array}%
\right\} ,%
\end{array}%
\end{equation*}%
which is closed, bounded and convex in $C_{0}^{1}(\overline{\Omega })\times
C_{0}^{1}(\overline{\Omega }).$ Define the operator $\mathcal{T}:\mathcal{O}%
_{C}\rightarrow C_{0}^{1}(\overline{\Omega })\times C_{0}^{1}(\overline{%
\Omega })$ by $\mathcal{T}(w_{1},w_{2})=(u,v)$ for all $(w_{1},w_{2})\in 
\mathcal{O}_{C}$, where $(u,v)$ is required to satisfy%
\begin{equation*}
(\mathrm{P}_{w})\qquad \left\{ 
\begin{array}{l}
-\Delta _{p_{1}}u=f_{1}(x,w_{1},w_{2},\nabla w_{1},\nabla w_{2})\quad \text{
in }\Omega \\ 
-\Delta _{p_{2}}v=f_{2}(x,w_{1},w_{2},\nabla w_{1},\nabla w_{2})\quad \text{
in }\Omega \\ 
u,v=0\text{ \ \ \ on }\partial \Omega .%
\end{array}%
\right.
\end{equation*}%
In view of the definition $\mathcal{O}_{C}$ and by $(\mathrm{H}_{f})$, we
have%
\begin{equation}
\begin{array}{l}
m_{i}w_{1}^{\alpha _{i}}w_{2}^{\beta _{i}}\leq f_{i}(x,w_{1},w_{2},\nabla
w_{1},\nabla w_{2}) \\ 
\leq M_{i}w_{1}^{\alpha _{i}}w_{2}^{\beta _{i}}+\left\vert \nabla
w_{1}\right\vert ^{\gamma _{i}}+\left\vert \nabla w_{2}\right\vert ^{\theta
_{i}} \\ 
\leq M_{i}w_{1}^{\alpha _{i}}w_{2}^{\beta _{i}}+\left\Vert \nabla
w_{1}\right\Vert _{\infty }^{\gamma _{i}}+\left\Vert \nabla w_{2}\right\Vert
_{\infty }^{\theta _{i}} \\ 
\leq M_{i}w_{1}^{\alpha _{i}}w_{2}^{\beta _{i}}+2C^{\max \{\gamma
_{i},\theta _{i}\}}\text{ in }\Omega ,\text{ for }i=1,2.%
\end{array}
\label{17}
\end{equation}%
Hence, recalling (\ref{12}) we achieve%
\begin{equation}
\begin{array}{l}
f_{i}(x,w_{1},w_{2},\nabla w_{1},\nabla w_{2}) \\ 
\leq M_{i}\tilde{c}_{1}C^{|\alpha _{i}|+|\beta _{i}|}d(x)^{\alpha _{i}+\beta
_{i}}+2C^{\max \{\gamma _{i},\theta _{i}\}}\text{ in }\Omega ,\text{ for }%
i=1,2.%
\end{array}
\label{13}
\end{equation}%
Then, since $\alpha _{i}+\beta _{i}>-1,$ Hardy-Sobolev inequality (see,
e.g., \cite[Lemma 2.3]{AC}) guarantees that $f_{i}(x,w_{1},w_{2},\nabla
w_{1},\nabla w_{2})\in W^{-1,p_{i}^{\prime }}(\Omega )$, for $i=1,2$, while
the unique solvability of $(u,v)$ in $(\mathrm{P}_{w})$ is readily derived
from Minty Browder's Theorem (see, e.g., \cite{Brezis}). Thus, $\mathcal{T}$
is well defined. Moreover, the regularity theory up to the boundary in \cite%
{L} or \cite{Hai}, depending on whether the sign of $\alpha _{i}+\beta _{i}$
is positive ($\alpha _{i}+\beta _{i}\geq 0$) or negative ($\alpha _{i}+\beta
_{i}\in (-1,0)$), respectively, yields $(u,v)\in C_{0}^{1,\sigma }(\overline{%
\Omega })\times C_{0}^{1,\sigma }(\overline{\Omega })$ and there is a
constant $R>0$ and $\sigma \in (0,1)$ such that it holds%
\begin{equation}
\left\Vert u\right\Vert _{C^{1,\sigma }(\overline{\Omega })},\left\Vert
v\right\Vert _{C^{1,\sigma }(\overline{\Omega })}<R\text{.}  \label{14}
\end{equation}

It is worth noting that the weak solutions of problem $(\mathrm{P}_{w})$
coincide with the fixed points of the operator $\mathcal{T}$. To reach the
desired conclusion, we shall apply Schauder's fixed point theorem (see, for
example, \cite[p. 57]{Z}).

\mathstrut

We claim that the map $\mathcal{T}:\mathcal{O}_{C}\rightarrow C_{0}^{1}(%
\overline{\Omega })\times C_{0}^{1}(\overline{\Omega })$ is continuous and
compact. Let $(w_{1,n},w_{2,n})\in \mathcal{O}_{C}$ such that $%
(w_{1,n},w_{2,n})\longrightarrow (w_{1},w_{2})$ in $C_{0}^{1,\sigma }(%
\overline{\Omega })\times {C_{0}^{1,\sigma }(\overline{\Omega })}.$ Setting $%
\mathcal{T}(w_{1,n},w_{2,n})=(u_{n},v_{n})$ reads as 
\begin{equation}
\left\{ 
\begin{array}{l}
\left\langle -\Delta _{p_{1}}u_{n},\varphi \right\rangle =\int_{\Omega
}f_{1}(w_{1,n},w_{2,n}\nabla w_{1,n},\nabla w_{2,n})\varphi \text{ }\mathrm{d%
}x, \\ 
\left\langle -\Delta _{p_{2}}v_{n},\psi \right\rangle =\int_{\Omega
}f_{2}(w_{1,n},w_{2,n}\nabla w_{1,n},\nabla w_{2,n})\psi \text{ }\mathrm{d}x,%
\end{array}%
\right.  \label{88}
\end{equation}%
for all $(\varphi ,\psi )\in W_{0}^{1,p_{1}}(\Omega )\times
W_{0}^{1,p_{2}}(\Omega ).$ Acting in (\ref{88}) with $(\varphi ,\psi
)=(u_{n}-u,v_{n}-v)$, (\ref{13}) and (\ref{11}) allow to apply Lebesgue's
dominated convergence theorem to deduce that 
\begin{equation*}
\lim_{n\longrightarrow \infty }\left\langle -\Delta
_{p_{1}}u_{n},u_{n}-u\right\rangle =\lim_{n\rightarrow \infty }\left\langle
-\Delta _{p_{2}}v_{n},v_{n}-v\right\rangle =0.
\end{equation*}%
Here the integrability of the functions $d(x)^{\alpha _{1}+\beta
_{1}}(u_{n}-u)$ and $d(x)^{\alpha _{2}+\beta _{2}}(v_{n}-v),$ which is known
by invoking the Hardy-Sobolev inequality under assumption (\ref{11}), is
essential. At this point, the $S_{+}$-property of $-\Delta _{p_{i}}$ on $%
W_{0}^{1,p_{i}}(\Omega )$ ensures that 
\begin{equation}
(u_{n},v_{n})\longrightarrow (u,v)\text{ in }W_{0}^{1,p_{1}}(\Omega )\times
W_{0}^{1,p_{2}}(\Omega ).  \label{15}
\end{equation}%
Hence, passing to the limit in (\ref{88}) leads to $(u,v)=\mathcal{T}%
(w_{1},w_{2})$. On the other hand, from (\ref{14}) we know that the sequence 
$\{(u_{n},v_{n})\}$ is bounded in $C_{0}^{1,\sigma }(\overline{\Omega }%
)\times {C_{0}^{1,\sigma }(\overline{\Omega })}$ for certain $\sigma \in
(0,1)$. Since the embedding $C_{0}^{1,\sigma }(\overline{\Omega })\subset
C_{0}^{1}(\overline{\Omega })$ is compact and taking into account (\ref{15}%
), it turns out that along a relabeled subsequence one has that $%
(u_{n},v_{n})\longrightarrow (u,v)$ in $C_{0}^{1}(\overline{\Omega })\times
C_{0}^{1}(\overline{\Omega }).$ Therefore, $\mathcal{T}$ is continuous.
Again, through (\ref{14}), it follows that $\mathcal{T}(\mathcal{O}_{C})$ is
bounded in $C_{0}^{1,\sigma }(\overline{\Omega })\times {C_{0}^{1,\sigma }(%
\overline{\Omega })}.$ Then, invoking the compactness of the embedding $%
C_{0}^{1,\sigma }(\overline{\Omega })\subset C_{0}^{1}(\overline{\Omega })$,
we conclude that $\mathcal{T}(\mathcal{O}_{C})$ is a relatively compact
subset of $C_{0}^{1}(\overline{\Omega })\times C_{0}^{1}(\overline{\Omega })$%
. This proves the claim.

We are left to show that $\mathcal{O}_{C}$ is invariant under $\mathcal{T}$.
Let $(w_{1},w_{2})\in \mathcal{O}_{C}$ and denote $(u,v)=\mathcal{T}%
(w_{1},w_{2})$. Using $(\mathrm{P}_{w})$ and combining Proposition \ref{P1}
with (\ref{17}), the weak comparison principle yields%
\begin{equation}
\begin{array}{l}
\underline{u}\leq u\leq \overline{u}\text{ \ and \ }\underline{v}\leq v\leq 
\overline{v}\text{ in }\Omega .%
\end{array}
\label{23}
\end{equation}%
On the other hand, on the basis of (\ref{13}) and (\ref{cdt}), Theorem \ref%
{T4} entails%
\begin{equation}
\Vert \nabla u\Vert _{\infty },\Vert \nabla v\Vert _{\infty }\leq
k_{p}\max_{i=1,2}||f_{i}(\cdot ,u,v,\nabla u,\nabla v)\Vert _{r_{i}}^{\frac{1%
}{p_{i}-1}},  \label{21}
\end{equation}%
for some positive constant $k_{p}$ depending only on $p_{i}$ and $\Omega $.
Exploiting once again (\ref{13}) one has%
\begin{equation}
\begin{array}{l}
||f_{i}(\cdot ,w_{1},w_{2},\nabla w_{1},\nabla w_{2})\Vert _{r_{i}} \\ 
\leq M_{i}\tilde{c}_{1}C^{|\alpha _{i}|+|\beta _{i}|}\left( \int_{\Omega
}d(x)^{r_{i}(\alpha _{i}+\beta _{i})}\text{ }\mathrm{d}x\right)
^{1/r_{i}}+2|\Omega |C^{\max \{\gamma _{i},\theta _{i}\}}.%
\end{array}
\label{19}
\end{equation}%
Bearing in mind (\ref{cdt}), \cite[Lemma]{LM} applies, showing that the
integral term $\left( \int_{\Omega }d(x)^{r_{i}(\alpha _{i}+\beta _{i})}%
\text{ }\mathrm{d}x\right) ^{1/r_{i}}$ is bounded by some constant $\mu >0$.
At this point, from (\ref{19}) and (\ref{cdt}), we have%
\begin{equation}
\begin{array}{l}
||f_{i}(\cdot ,w_{1},w_{2},\nabla w_{1},\nabla w_{2})\Vert _{r_{i}} \\ 
\leq M_{i}\tilde{c}_{1}C^{|\alpha _{i}|+|\beta _{i}|}\mu +2|\Omega |C^{\max
\{\gamma _{i},\theta _{i}\}}\leq (k_{p}^{-1}C)^{p_{i}-1},%
\end{array}%
\end{equation}%
provided $C>1$ large enough. Hence, owing to (\ref{21}) we conclude that%
\begin{equation}
\Vert \nabla u\Vert _{\infty },\Vert \nabla v\Vert _{\infty }\leq C.
\label{22}
\end{equation}%
Gathering (\ref{23}) and (\ref{22}) enable us to infer that $(u,v)\in 
\mathcal{O}_{C}$, thereby the inclusion $\mathcal{T}(\mathcal{O}_{C})\subset 
\mathcal{O}_{C}$ holds true.

\mathstrut

We are thus in a position to apply Schauder fixed point theorem to the map $%
\mathcal{T}:\mathcal{O}_{C}\rightarrow \mathcal{O}_{C}$, which establishes
the existence of $(u,v)\in \mathcal{O}_{C}$ satisfying $(u,v)=\mathcal{T}%
(u,v)$. In view of the definition of $\mathcal{T}$, it turns out that $(u,v)$
is a (positive) smooth solution of problem $($\textrm{P}$)$. This ends the
proof of Theorem \ref{T1}.

\begin{acknowledgement}
The authors were supported by the Directorate-General of Scientific Research
and Technological Development (DGRSDT).
\end{acknowledgement}

\end{document}